\numberwithin{equation}{section}
\newtheorem{theorem}{Theorem}[section]
\newtheorem{proposition}{Proposition}[section]
\newtheorem{lemma}{Lemma}[section]
\newtheorem{example}{Example}[section]
\DeclareMathOperator{\hess}{Hess}
\def\R{\mathbb{R}}           
\def\S{\mathbb{S}}    
\begin{document}

\title[Gap phenomena for CMC surfaces]
{Gap phenomena for constant mean curvature surfaces}

\author[Barbosa]{Ezequiel Barbosa}
\address{Departamento de Matem\'{a}tica, Universidade Federal de Minas Gerais, 
Belo Horizonte, MG, Brazil}
\email{ezequiel@mat.ufmg.br}

\author[Cavalcante]{Marcos P. Cavalcante} 
\address{ Instituto de Matem\'{a}tica, Universidade Federal de Alagoas,
Macei\'o, AL, Brazil} 
\email{marcos@pos.mat.ufal.br}

\author{Edno Pereira}
\address{Departamento de Matem\'{a}tica e Estat\'{i}stica, Universidade Federal de S\~ao Jo\~ao del Rei, 
 S\~ao Jo\~ao del Rei, MG, Brazil}
\email{ednopereira@ufsj.edu.br}

\subjclass[2010]{53A10, 49Q10.}

\date{\today}

\keywords{Complete cmc surfaces, free boundary surfaces, gap results.}

\begin{abstract}

In this paper, we prove gap results for constant mean curvature (CMC) surfaces. Firstly, we find a natural inequality for CMC surfaces which imply convexity for distance function. We then show that if $\Sigma$ is a complete, properly embedded CMC surface in the Euclidean space satisfying this inequality, then $\Sigma$ is either a sphere or a right circular cylinder. Next, we show that if $\Sigma$ is a free boundary CMC surface in the Euclidean 3-ball satisfying the same inequality, then either $\Sigma$ is a totally umbilical disk or an annulus of revolution. These results complete the picture about gap theorems for CMC surfaces in the Euclidean 3-space. We also prove similar results in the hyperbolic space and in the upper hemisphere, and in higher dimensions.
\end{abstract}

\date{\today}

\maketitle

\section{Introduction}\label{intro}
Let $\Sigma$ be a two sided $n$-dimensional hypersurface isometrically immersed in a 
Riemannian manifold $M^{n+1}$,
and let denote by $N$ a unit normal vector field globally defined along $\Sigma^n$. The extrinsic geometry
of $\Sigma$ is naturally measured by the first variation of $N$, which is given by its shape operator. 
More precisely, given $p\in \Sigma$, the shape operator of $\Sigma$ with respect to $N$ at $p$ is the 
self-adjoint linear operator given by 
$A:T_p\Sigma\to T_p\Sigma$, $A(X) = -\nabla_XN$, where $\nabla$ stands for the 
Levi-Civita connection on $M$. The eigenvalues of $A$ are the principal curvatures of $\Sigma$ in $M$.

In this setting, some natural curvature functions arise from $A$. The weakest one is 
the mean curvature $H =\textrm{trace}\, A$ and strongest one is $\|A\|^2$, the squared norm of $A$, in the
sense that conditions on $H$ are less restrictive than the similar ones on $\|A\|^2$. For instance,
if $H=0$, $\Sigma$ is called a minimal hypersurface, while if $\|A\|^2=0$, $\Sigma$ is called a totally 
geodesic hypersurface, and  it is well known that in the Euclidean space or in the unit sphere
there are infinitely many distinct complete minimal hypersurfaces,
but only the hyperplanes are totally geodesic hypersurfaces in $\mathbb{R}^{n+1}$, and
only the equators are totally geodesic hypersurfaces in $\mathbb S^{n+1}.$ Moreover, 
it follows from the works of Marques, Neves and Song that on a closed Riemannian manifold we
always have infinitely many distinct compact minimal hypersurfaces, while we may have no totally geodesic ones.

In 1968, J. Simons \cite{Simons} proved that if $\Sigma^n$ is a compact minimal submanifold in the unit sphere $\mathbb{S}^{n+k}$, then

\[
\int_{\Sigma}[(2-1/k)\|A\|^2-n]\|A\|^2d\Sigma \geq0\,.
\]
In particular, if $\|A\|^2\leq \frac{n}{2-1/k}$, then either $\|A\|=0$ or $\|A\|^2=\frac{n}{2-1/k}$. 

It means that there exists a gap in the values of $\|A\|^2$ in the space of all minimal submanifolds in the unit sphere. The submanifolds satisfying $\|A\|^2= n/(2 -1/k)$ were latter classified by the works of Lawson \cite{L}, when $k=1$, and Chern, do Carmo and Kobayshi \cite{CdCK}, for any $k$. They are the Veronese minimal surface in $\mathbb{S}^4$ and the Clifford tori.

Since then, gap phenomena for submanilfolds have been studied by many authors. In particular, the above result was generalized to the class of constant mean curvature (CMC) hypersurfaces in the unit sphere by Alencar and do Carmo in \cite{AdC}. In this case, it is natural to use the traceless second fundamental in order to 
detect such kind of gaps. We recall that the traceless second fundamental form of a submanifold is defined by $\Phi= A -\frac{H}{n}Id$, and $H=trA$ is the 
non-normalized mean  curvature. Clearly, $\|\Phi\|^2\equiv0$ if and only if $\Sigma^n$ is totally umbilical, and in this sense, $\|\Phi\|^2$ measures how much the submanifold deviates from being totally umbilical.

In the case of closed hypersurface in $\mathbb{S}^{n+1}$ we have a family of CMC tori obtained by the product of a sphere $\mathbb{S}^{n-1}_r$ with a circle $\mathbb{S}^{1}_{\lambda}$. More precisely, given $0 < r <1$
and $\lambda=\sqrt{1-r^2}$, the hypersurface $\mathbb{T}_r:=\mathbb{S}^{n-1}_{r}\times \mathbb{S}^{1}_{\lambda}$ has constant mean curvature $H(r)=\frac{n-1-nr^2}{r\sqrt{1-r^2}}$.

If $\Sigma$ is a hypersurface with constant mean curvature $H$, we can choose an orientation
for $\Sigma$ such that $H \geq 0$. In this case we consider the following polynomial
$$
P_H(x)=x^2 + \frac{n-2}{\sqrt{n(n-1)}}Hx - n\Big(\frac{H^2}{n^2} + 1\Big).
$$
Let $C_H:=x_0^2$, where $x_0$
the only positive solution of $P_H(x)=0$
Using these notations Alencar and do Carmo proved that if the traceless second fundamental form satisfies $\|\Phi\|^2\leq C_H$, then:

\begin{enumerate}
\item[i)]  either $\|\Phi\|^2\equiv 0$ and $\Sigma^n$ is totally umbilical in $\S^{n+1}$,

\item[ii)] or $\|\Phi\|^2\equiv C_H$ and $\Sigma^n$ is an $H(r)$-torus in $\S^{n+1}$.
\end{enumerate}

In the context of complete minimal surfaces in the Euclidean space a gap phenomena was discovered by Meeks, P\'erez, and Ros in \cite{MPR}. They proved that if $\Sigma^2$ is a properly embedded minimal surface in $\mathbb{R}^3$ such that
\begin{equation}\label{AN}
 \|A\|^2 \left\langle x,N\right\rangle^2\leq2
\end{equation}
on $\Sigma$, then $\Sigma$ is either a plane or a catenoid centered at the origin 0. In fact, they proved this result assuming $\|A\|^2|x|^2\leq2$, but the proof follows using the weakest inequality (\ref{AN}). Following Meeks, P\'erez, Ros' nomenclature we refer a condition as in (\ref{AN}) as curvature decay condition or curvature pinching condition.

More recently, this problem was considered by Ambrozio and Nunes \cite{AN} in the context of free boundary minimal surfaces in the unit ball, that is, compact minimal surfaces immersed in $\mathbb{B}^3$ whose boundary  is not empty and intersects $\mathbb{S}^2=\partial\mathbb{B}^3$ in a right angle. These surfaces are critical points for the area functional for those variations whose boundaries are free to move on $\mathbb{S}^2$, and they share many similarities with closed minimal surfaces in the unit sphere. In this case, the simplest examples are the equatorial flat disk and a portion of the catenoid, called the {\it critical catenoid}. Such surfaces were detected by the gap theorem of Ambrozio and Nunes. More precisely, if $\Sigma^2$ is a compact free boundary minimal surface in $\mathbb{B}^3$ such that $\|A\|^2\left \langle x,N\right\rangle^2\leq2$, then

\begin{enumerate}
\item[i)] either $\|A\|^2(x) \left\langle x,N \right\rangle^2 \equiv 0$ and 
$\Sigma$ is a flat equatorial disk,
\item[ii)]  or $\|A\|^2(p) \left\langle p,N \right\rangle^2 = 2$ at some point $p$
and $\Sigma$ is a critical catenoid.
\end{enumerate}

We point out that higher dimensional versions of this result where recently obtained by the first and the third named authors with Gon\c calves in \cite{BPG}, and topological versions were established by the second named author, Mendes and Vit\'orio in \cite{CMV}.

In this present paper, we are interested in investigating gap phenomena for constant mean curvature surfaces in the spirit above in order to complete this picture. Our first result reads as follows.

\begin{theorem}\label{teoclaA}
Let $\Sigma \subset \mathbb R^3$ be a complete properly embedded CMC surface.
Assume that for all points $x$ in $\Sigma$,
	\begin{equation}\label{condcomplet1}
		\|\Phi\|^2 \left\langle x,N \right\rangle^2 
		\leq \frac{1}{2}\left(2 + H\left\langle x,N \right\rangle \right)^2\,.
	\end{equation} 
Then, $\Sigma$ is either a plane, or a catenoid or a round sphere centered at the origin, or a  right cylinder having its rotation axis containing the origin.	
\end{theorem}

In fact, catenoids, spheres and cylinders centered at the
origin are precisely the surfaces that contain
points where the equality is attained. Moreover, 
such cylinders and spheres saturates the inequality in all points.
When $H=0$ the result follows from the work of Meeks, P\'erez and Ros.

 Our technique also applies for complete CMC surfaces immersed in the 
 hyperbolic space. If we consider the Poincar\'e ball model $\mathbb H^3 = (\mathbb B^3, \bar g)$, 
 $\bar g = \frac{4}{(1-|x|^2)^2} \left\langle\, \cdot\,,\,\cdot \right\rangle $, 
 we can define the position vector as   $\vec{x}=\sum_{i=1}^{3}x_i\partial_i$ 
 in $\mathbb H^3$, which is a  conformal vector field 
that is, the Lie derivative of $\bar{g}$ with respect to $\vec{x}$ satisfies 
$$
\mathcal{L}_{\vec{x}} \bar{g}  = 2\sigma \bar{g}, 
$$
where $\sigma (x) =\frac{1 + |x|^2}{1-|x|^2}$ is called {\it  potential function.}
Using these notations, we are able to characterize
Delaunay surfaces in $\mathbb H^3$, that is, complete
CMC surfaces of revolution.
 
 \begin{theorem}\label{teohipA}
Let $\Sigma \subset \mathbb H^3$ be a complete properly embedded CMC surface. Assume that for all points $x$ in $\Sigma$,
	\begin{equation}\label{condcomplet2}
		\frac{\|\Phi\|^2}{\sigma^2} \bar g (\vec{x},N )^2 
		\leq \frac{1}{2}\left(2 + \frac{H}{\sigma}\bar g( \vec{x},N )\right)^2\,.
	\end{equation} 
Then, $\Sigma$ is either simply connected or a Delaunay surface in such way that its rotation axis contains the origin.
\end{theorem}

 We will present a unified proof of Theorems \ref{teoclaA} and \ref{teohipA} in Section \ref{proofs}. 
 In our approach we explore the fact that a CMC surface  immersed in a space form is either totally umbilical or its umbilical points are isolated. This fact, together with the  quadratic decay of curvature  condition imply that an appropriate $\Psi$ is convex when restrict to $\Sigma$. Studying the set of critical points of such function, we can prove that  $\Sigma$ is either simply connected or an annulus. In the unified context these conclusions are obtained using a more involved function.

 In the specific case when the ambient space is $\mathbb{R}^3$, we can use  the recent work of Meeks and Tinaglia \cite{MEEKS2018809, meeks2016geometry} 
about embedded CMC surfaces in $\mathbb R^3.$
As consequence of their work we know that round spheres are the only complete simply connected surfaces embedded in $\mathbb{R}^3$ with non-zero constant mean curvature. 
Moreover, if $\Sigma$ is a complete CMC annulus embedded  in $\mathbb{R}^3$, 
then $\Sigma$ is a Delaunay surface, that is, a circular right cylinder or an unduloid. 
Nevertheless, we can check unduloids do not satisfy the gap condition \ref{condcomplet1}. In fact,
we recall that the generatrix curve of a Delaunay surface with parameters $H$ and $B$ 
can be parametrized by $\beta(s) = (x(s), 0, z(s))$, $s\in \mathbb R$, where
\[
x(s)=\frac{1}{H}\sqrt{1+B^2+2B\sin(Hs+\frac{3\pi}{2})} 
\]
and
\[
z(s)=\int_{\frac{3\pi}{2H}}^{s+\frac{3\pi}{2H}}{\frac{1+B\sin(Ht)}{\sqrt{1+B^2+2B\sin(Ht)}}}dt.
\]

Let us assume that $0<B<1$ and $H>0$, which correspond to the unduloids. 
The key observation in this case is that the function  $z$ satisfies $z'(s)>0$ for all $s$. 
Another important observation is that the function $z=z(s)$ goes to infinity when $s$ 
is arbitrary large. 
Using these facts we can construct a sequence of points $p_n=\beta(t_n)$ such that the inequality 
\begin{equation*}
	\|\Phi\|^2 \left\langle x,N\right\rangle^2 > \frac{1}{2}(2+H\left\langle x,N\right\rangle)^2
\end{equation*}
is satisfied at $p_n$. Hence, we obtain that $\Sigma$ is either a sphere, or a right circular cylinder. 

From this observation a new fact  arises here. Unduloids do not satisfy the quadratic decay of 
curvature condition, but there are  some pieces of unduloids inside the balls which 
satisfy the quadratic decay of curvature condition and are free boundary. These facts and the Ambrozio-Nunes' theorem motivated us to investigate gap phenomenon for free boundary CMC surfaces inside the unit ball. This is the subject of our next theorem.

\begin{theorem}\label{teoprin} 
Let $\Sigma$ be a compact free boundary CMC surface in the unit ball $\mathbb B^3$ with $H\neq 0$.
Assume that for all points $x$ in $\Sigma$,
\begin{equation*}\label{eqteoprin}
		\|\Phi\|^2 \left\langle {x},N \right\rangle^2 
		\leq \frac{1}{2}\left(2 + H\left\langle x,N \right\rangle \right)^2\,.
\end{equation*}
Then, 
\begin{enumerate}
\item[i)]either $\|\Phi\|^2 \left\langle {x},N \right\rangle^2 \equiv 0$ 
and $\Sigma$ is a totally umbilical disc;
\item[ii)]or the equality occurs at some point and $\Sigma$ is a part of a Delaunay surface 
having its axis containing the origin. 
\end{enumerate}
\end{theorem}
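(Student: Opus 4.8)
The plan is to recast the pinching condition as a convexity statement for the function $w=\tfrac12|\vec{x}|^2$ and then run a maximum principle argument in the spirit of Ambrozio--Nunes. First I would record the two structural identities for a CMC surface in $\mathbb{R}^3$: writing $u=\langle \vec{x},N\rangle$ for the support function and using $\Delta \vec{x}=HN$, one has
\[
\Delta w = 2+Hu, \qquad \hess w = g_{_\Sigma}+u\,\Pi ,
\]
so the trace-free part of $\hess w$ equals $u\,\Phi$ and $|\hess w-\tfrac12(\Delta w)g_{_\Sigma}|^2=u^2|\Phi|^2$. Since $(\Delta w)^2-|\hess w|^2=2\det\hess w$ on a surface, a direct computation shows that $(\ref{eqteoprin})$ is equivalent to
\[
\det\hess w = 1+Hu+Ku^2 \ge 0 ,
\]
i.e. $\hess w$ is semidefinite at every point, its eigenvalues being $1+u\kappa_1$ and $1+u\kappa_2$; equality in $(\ref{eqteoprin})$ at a point $p$ is exactly the degeneracy of $\hess w$ at $p$. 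The free boundary condition enters through the boundary data: on $\partial\Sigma$ one has $u\equiv 0$, the outward conormal equals $\vec{x}$, $w\equiv\tfrac12$ and $\partial_\nu w\equiv 1$.

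The case (i) reduction is then short. If $u^2|\Phi|^2\equiv 0$, then $\Phi$ vanishes on the open set $\{u\neq 0\}$ (or else $u\equiv 0$). Because $\Sigma$ is CMC in $\mathbb{R}^3$, its Hopf differential is holomorphic, so the umbilic set $\{\Phi=0\}$ is either all of $\Sigma$ or discrete; the identity principle therefore forces $\Phi\equiv 0$. Hence $\Sigma$ is totally umbilic, that is, a spherical cap (a flat disk in the limiting case).

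For the dichotomy I assume $\Phi\not\equiv 0$ and aim to produce an equality point. Away from the isolated umbilics I would combine the Hopf identity $\Delta\log|\Phi|=2K=\tfrac{H^2}{2}-|\Phi|^2$ with the support-function equation $\Delta u=-H-|A|^2u$ to show that, after fixing the orientation so that $2+Hu>0$ in the non-umbilic regime, the smallest eigenvalue of $\hess w$ (equivalently the scale-invariant quantity $\eta:=\sqrt{2}\,|u|\,|\Phi|/(2+Hu)$) is a subsolution of a second order elliptic equation, with $(\ref{eqteoprin})$ reading $\eta\le 1$ and equality being the equality case of $(\ref{eqteoprin})$. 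The free boundary condition $u|_{\partial\Sigma}=0$ makes $\eta\to 0$ at $\partial\Sigma$, so the relevant extremum is interior; the strong maximum principle then propagates an interior equality point to equality in $(\ref{eqteoprin})$ everywhere. Conversely, if equality never occurred, then $\hess w$ would be positive definite everywhere, so $w$ would be a strictly convex function on $\Sigma$ with $w<\tfrac12$ in the interior, $w\equiv\tfrac12$ on $\partial\Sigma$, and no interior critical point other than a single nondegenerate minimum; Morse theory for manifolds with boundary (via the Euler characteristic) then forces $\Sigma$ to be a disk, and the classical Hopf-differential argument for free boundary disks of Nitsche gives $\Phi\equiv 0$, contradicting our assumption. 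Thus equality in $(\ref{eqteoprin})$ holds at some point, hence everywhere.

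It remains to identify the equality case. When $\det\hess w\equiv 0$ with $\hess w$ of rank one, the kernel defines a line field whose integral curves are the level sets of $w=\tfrac12|\vec{x}|^2$, i.e. the intersections of $\Sigma$ with the spheres centered at the origin; the degeneracy $1+u\kappa_i\equiv 0$ pins one principal curvature to $-1/u$ along these curves and should force $\Sigma$ to be invariant under the rotations fixing a line through the origin. A complete rotationally symmetric CMC surface is a piece of a Delaunay surface, which gives conclusion (ii). I expect the \textbf{main obstacle} to be the step establishing the precise elliptic inequality for $\eta$: the terms involving $H$ have no analogue in the minimal Ambrozio--Nunes setting and must be absorbed using the pinching itself, while the umbilic points where $\log|\Phi|$ is singular have to be excised carefully (they are isolated by holomorphicity of the Hopf differential). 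The rotational classification of the degenerate-Hessian case is the second delicate point, since it must be carried out globally and be compatible with the free boundary condition.
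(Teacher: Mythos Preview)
Your reformulation of the pinching as $\det\hess w\ge 0$ and the ``strict inequality $\Rightarrow$ disk $\Rightarrow$ Nitsche'' path are exactly what the paper does (the paper phrases the topological step via total convexity of the minimum set and $\pi_1$, but your Morse-theoretic variant is equivalent). Two points deserve correction, one minor and one substantive.

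\medskip
\textbf{Positivity of $2+Hu$.} Your remark ``after fixing the orientation so that $2+Hu>0$'' does not work: under $N\mapsto -N$ one has $H\mapsto -H$ and $u\mapsto -u$, so $Hu$ is orientation-invariant. Knowing only $\det\hess w\ge 0$ does not give positive semidefiniteness. The paper proves $v:=2+Hu\ge 0$ by a separate argument (their Proposition~2.1): if $v(p)<0$ at some interior point, join $p$ to $\partial\Sigma$ (where $v=2$) by a path; any zero of $v$ on this path forces, via the pinching, an umbilic point, and since umbilics of a non-umbilic CMC surface are isolated one can reroute the path through a small annulus avoiding that umbilic and produce a second umbilic nearby, a contradiction. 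This is the step you are missing; it is genuinely needed to turn $\det\hess w\ge 0$ into $\hess w\ge 0$ when equality points are allowed.

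\medskip
\textbf{The equality/rotational case.} Here your proposal diverges from the paper, and the part you flag as the ``main obstacle'' is in fact bypassed entirely. The paper never establishes an elliptic inequality for a quantity like your $\eta$, and it does \emph{not} show that equality in~(\ref{eqteoprin}) propagates to all of $\Sigma$. Instead it argues as follows. Once $\hess w\ge 0$, the minimum set $\mathcal{C}=\{w=\min w\}$ is totally geodesically convex; if it contains more than one point (which is exactly what happens when strict inequality fails somewhere), any minimizing geodesic $\gamma\subset\mathcal{C}$ satisfies $\nabla^{\Sigma}w=0$ along $\gamma$, hence $N$ is radial along $\gamma$ and $\gamma$ is an arc of a great circle of the sphere $\{|x|^2=2\min w\}$, with $\gamma'$ a principal direction. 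Let $E$ be the unit normal to the plane of $\gamma$, $V=\vec{x}\wedge E$ the corresponding rotational Killing field, and $v=\langle V,N\rangle$ the associated Jacobi function. One checks $v\equiv 0$ along $\gamma$ and, using that both principal directions are identified along $\gamma$, that every point of $\gamma$ is a \emph{critical} point of $v$ in the nodal set $v^{-1}(0)$. By Cheng's theorem such critical points are isolated for a nontrivial solution of $\Delta v+|A|^2v=0$; hence $v\equiv 0$, the Killing field $V$ is tangent to $\Sigma$, and $\Sigma$ is rotational, i.e.\ a piece of a Delaunay surface. This argument is local-to-global via the nodal set rather than via a maximum principle, and it does not require (nor conclude) that $\det\hess w$ vanishes identically. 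Your rank-one/level-set picture for the degenerate Hessian is therefore not the mechanism the paper uses, and your proposed subsolution inequality for $\eta$ is not needed.
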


Figure \ref{calotaanel} below shows the two cases in Theorem \ref{teoprin}.
\begin{figure}[h]
    \centering
    \includegraphics[width=7.5cm]{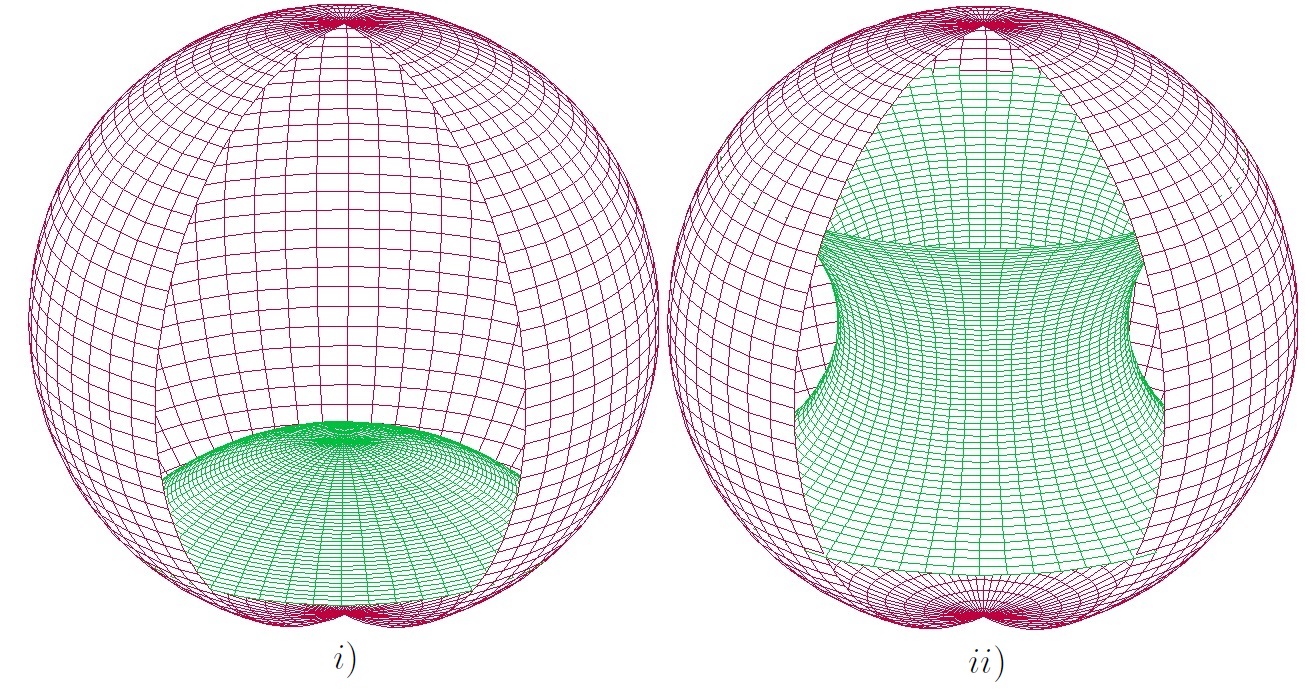}
    \caption{i) spherical cap. ii) CMC annulus.} 
\label{calotaanel}
\end{figure}

Theorem \ref{teoprin} can be obtained in context of  geodesic ball of  space forms $\mathbb{H}^3$ or $\mathbb{S}^3_+$. In this case, we consider the Poincar\'e ball model $\mathbb{H}^3$ for the hyperbolic space as above and for the hemisphere we consider $\mathbb{S}_+^3=(\mathbb{B}_1^3, \bar{g})$, where $\bar g = \frac{4}{(1+|x|^2)^2} \left\langle\, \cdot\,,\,\cdot \right\rangle $. The position vector field $\vec{x}=\sum_{i=1}^{3} x_i\partial_i$ is conformal in $\mathbb{S}^3_+$ and the potential function in this case is given by $\sigma (x) =\frac{1 - |x|^2}{1+|x|^2}$. Let $\textbf{0}$ be the center of $\mathbb{B}_1^3$ and denote by $B_r($\textbf{0}$)$ the geodesic ball in $\mathbb{H}^3$ (or $\mathbb{S}_+^3$) with radius $r$ and centered at $\textbf{0}$. The version of Theorem \ref{teoprin} to space forms can be stated as follows. 

\begin{theorem}\label{teoprin002} 
	Let $B_{r}(\textbf{0})$ be a geodesic ball in hyperbolic space $\mathbb{H}^3$ or hemisphere $\mathbb{S}^3_+$ and let $\Sigma$ be a compact free boundary CMC surface in $B_{r}(\textbf{0})$. Assume that for all points $x$ in $\Sigma$,
	\begin{equation*}\label{condfs}
	\frac{\|\Phi\|^2}{\sigma^2} \bar g (\vec{x},N )^2 
	\leq \frac{1}{2}\left(2 + \frac{H}{\sigma}\bar g( \vec{x},N )\right)^2\,.
	\end{equation*}
	Then, 
	\begin{enumerate}
		\item[i)]either $\|\Phi\|^2 \bar{g}(\vec{x},N)^2 \equiv 0$ 
		and $\Sigma$ is a totally umbilical disc; 
		\item[ii)] or $\Sigma$ rotationally symmetric with a nontrivial topology.  
	\end{enumerate}
\end{theorem}

We point that Theorem 4 in \cite{ABP} includes the ambient spaces considered in Theorems \ref{teoprin} and \ref{teoprin002}, but they have an additional condition in order to get gap results. At this point, in our approach the complex structure of CMC surfaces was essential.

The proofs of Theorem \ref{teoprin} and Theorem \ref{teoprin002} are inspired in the work of Ambrozio and Nunes \cite{AN}, and
follows same steps as in the proof of Theorem \ref{teoclaA}. However, we wish to point out three difficulties circumvented in the case $H\neq 0$. The first one was to found the correct condition which generalizes the condition (\ref{AN}); 
the second one was to guarantee that  condition $(\ref{condcomplet1})$ implies  a good convexity condition for an appropriate function $\Psi$ when restricted to $\Sigma$;  
the third is to show that there are nontrivial examples of surfaces satisfying the conditions in Theorem \ref{teoprin}. As we mention above, in the specific case of free boundary CMC surfaces in the unit ball the natural candidates are portions of some Delaunay surfaces. Although it is intuitive, the computations are involved, since the construction of such surfaces are somewhat ingenious.    

To conclude this Introduction, we would like to mention that we can obtain a sort of gap results in higher dimension for complete minimal hypersurfaces in the Euclidean space, or more generally in Hadamard manifolds by applying the Hardy inequality \cite{Carron}, as we will describe in Section \ref{high}.

\subsection*{Acknowledgments} 
The authors would like to thank the  referee who provided useful and detailed comments.
The first and the third authors were partially supported by  CNPq, CAPES and FAPEMIG/Brazil 
agency grants. The second author was partially supported by CNPq (Grants 309733/2019-7, 201322/2020-0 and 405468/2021- 0) and CAPES (Grants 897/18 and 88887.368700/2019-00). 
Part of this work was done during the second author's stay in Princeton, and he is grateful to Princeton University
 for the support and kind hospitality.

\section{Preliminaries}\label{pre}

Let $\mathbb{B}^3_r$ the Euclidean three-ball with radius $r$ and canonical coordinates $x=(x_1,x_2,x_3)$. Consider in $\mathbb{B}^3_r$ the conformal metric 
of constant curvature $c$, that is,
\begin{equation}\label{metricgc}
\bar{g}=e^{2h}\!\left\langle\, \cdot\,,\,\cdot \right\rangle
\end{equation}
where $h(x)=u(|x|^2)$, and $u$ and $r$ are given by
\begin{equation}\label{funcc}
u(|x|^2)=
\begin{cases}
\ln \left(\dfrac{2}{1-|x|^2}\right),\,\,\, r=1 &\mbox{ for } c=-1,\\
0,\,\,\, r=\infty &\mbox{ for } c=0,\\
\ln \left(\dfrac{2}{1+|x|^2}\right), \,r=1 & \mbox{ for } c=1.
\end{cases} 
\end{equation}
That is, $(\mathbb{B}^3_r,\bar{g})$ is either the hyperbolic space $\mathbb{H}^3$, the euclidean space $\mathbb{R}^3$ or the hemisphere $\mathbb{S}^3_+$,
accordingly to the value of $c$.  

The position vector field $\vec{x}=\sum_{i=1}^{3}x_i\partial_i$ in $(\mathbb{B}_r^{3},\bar{g})$ 
is conformal, 
that is, the Lie derivative of $\bar{g}$ with respect to $\vec{x}$ satisfies 
$$
\mathcal{L}_{\vec{x}} \bar{g} = 2\sigma\bar{g}, 
$$
where $\sigma(x) = 1 + 2u(\left| x \right|^2)\left|x\right|^2$ is called {\it potential function.}

Let $\Sigma$ be a smooth surface in $(\mathbb{B}^3_r,\bar{g})$ and 
let $\varphi: \mathbb{B}^3_r\to J \subset \mathbb R$ the function defined as the squared norm of the position
vector, that is, $\varphi(x)=\bar{g}(\vec{x},\vec{x})$. 
Following the exposition in \cite{ABP} we know that there is 
a differentiable function $\zeta:J \to \R $ such that $\zeta'(s)>0$, $\forall s\in J $ being
the eigenvalues of Hessian of the function $\Psi=\zeta(\varphi)$ restrict to $\Sigma$  given by
\begin{equation}\label{authes}
\bar{\lambda}_1=2\sigma^2\zeta'(s)\left(1+\dfrac{\bar{k}_1}{\sigma}\bar{g}(\vec{x},N)\right)\ \text{and}\  \bar{\lambda}_2
=2\sigma^2\zeta'(s)\left(1+\dfrac{\bar{k}_2}{\sigma}\bar{g}(\vec{x},N)\right),
\end{equation}
where $\bar{k}_1$ and $\bar{k}_2$ are the principal curvatures of $\Sigma$ with respect to the metric $\bar{g}$. 
Note that following the same terminology than \cite{ABP}, we conclude that if $\bar{g}$ is the Euclidean metric, then $\zeta(s)=c_1s+c_2$ for constants $c_1$ and $c_2$ such that $c_1>0$. Thus in the Euclidean case, if we choose  $c_1=1$ and $c_2 = 0$ we have $\Psi(x)=(\zeta \circ \varphi)(x)=|\vec{x}|^2$. 

The next lemmas show that  the gap condition implies the convexity of $\Psi$ restricted to $\Sigma$.
We first prove  the case of free boundary surfaces.

\begin{lemma}\label{propprin1} Under the same conditions of Theorem \ref{teoprin} or Theorem \ref{teoprin002} we have 
 $$
 \hess_{\Sigma}\Psi (x)(Y,Y) \geq 0,
 $$ 
 for all  $x\in \Sigma$ and $Y \in T_x\Sigma$. That is, the Hessian of $\Psi$ restricted to $\Sigma$ is positive semidefinite.
\end{lemma}

\begin{proof}

In order to prove that $\hess_{\Sigma}\Psi (x)(Y,Y) \geq 0$, we need to show that
$\bar{\lambda}_1$ and $\bar{\lambda}_2$ in (\ref{authes}) are nonnegative. 
Arguing  as in \cite{ABP} we easily see that condition (\ref{condcomplet1}) (or (\ref{condcomplet2})) ensures  
$\bar{\lambda}_1$ and $\bar{\lambda}_2$ have  same sign. 
Now, we need to show that at least one $\bar{\lambda}_i$ is non-negative. 
For this end it is enough to show that the function $v$ defined on $\Sigma$ and given by 
$$
v:= \left(1+\dfrac{\bar{k}_1}{\sigma}\bar{g}(\vec{x},N)\right) + \left(1+\dfrac{\bar{k}_2}{\sigma}\bar{g}(\vec{x},N)\right) = 2+\dfrac{H}{\sigma}\bar{g}(\vec{x},N)
$$ 
is nonnegative, since it implies $\bar\lambda_1+\bar\lambda_2\geq 0$.  
In the following, let us assume that $\Sigma$ is not totally umbilical and that $v(p)<0$ at some point $p\in \Sigma$. The free boundary condition ensures that
$$
v =2+\frac{H}{\sigma}\bar{g}(\vec{x},N) = 2 
$$ along $\partial \Sigma$. 
Choose $q \in \partial \Sigma$ and let $\alpha:[0,1] \rightarrow \Sigma$ be a continuous curve such that 
$\alpha(0)=p$ and $\alpha(1)=q$ (see Figure \ref{fig2}). 
Since $v$ changes signal along $\alpha$, there is a point $p_0 =\alpha(t_0)$, $t_0\in(0,1)$ such that $v(p_0)=0$. On the other hand, since $0=v(p_0)=2+\frac{H}{\sigma}\bar{g}(\vec{x},N)$, we have 
$\bar{g}(\vec{x},N)(p_0)\neq 0$. This together with our curvature decay condition implies that $\|\Phi\|^2(p_0)=0$, 
and hence $p_0$ is an umbilical point. Since $(\mathbb{B}^3_r,\bar{g})$ has constant sectional curvature 
and $\Sigma$ is a CMC surfaces that is not a totally umbilical,
$p_0$ is an isolated point.
Let  $\varepsilon > 0$ such that $v(\alpha(t))<0$, if $t\in[t_0-\varepsilon,t_0)$ and $v(\alpha(t))>0$, 
if $t \in (t_0,t_0+\varepsilon]$, or vice-versa. 

Let $\mathbb{D}_{r_0}(p_0)$ be a geodesic disk with radius $r_0$ centered at $p_0$ such that $p_0$ is 
the only umbilical point of $\Sigma$ on $\mathbb{D}_{r_0}(p_0)$. 
We can choose $r_0$ and $\varepsilon$ in such way that $\alpha(t) \in \mathbb{D}_{r_0}(p_0)$ for all 
$t \in [t_0-\varepsilon,t_0+\varepsilon]$. 
Choose $\tilde{r}_0<r_0$ such that $\alpha(t_0-\varepsilon)$, $\alpha(t_0+\varepsilon)$ 
$\notin$ $\mathbb{D}_{\tilde{r}_0}(p_0)$. 
Let $\mathcal{A}=\mathbb{D}_{r_0}(p_0) \setminus \mathbb{D}_{\tilde{r}_0}(p_0)$ be the 
annulus determined by these two disks and let $\beta$ denotes a path in $\mathcal{A}$ joining the 
points $\alpha(t_0-\varepsilon)$ and $\alpha(t_0+\varepsilon)$, see Figure \ref{fig2}. 
Again, $v$ changes the signal along of $\beta$, and therefore there is a point $\tilde{q}\in \mathbb{D}_{r_0}(p_0)$ 
such that $v(\tilde{q})=0$. But, as above, it implies that $\tilde{q}$ is another umbilical point in  $\mathbb{D}_{r_0}(p_0)$
which is a contradiction, and therefore we have that $v\geq 0$ as desired. 
\end{proof}
\begin{figure}[h]\label{fig2}
\begin{overpic}[width=0.6\textwidth,tics=10]{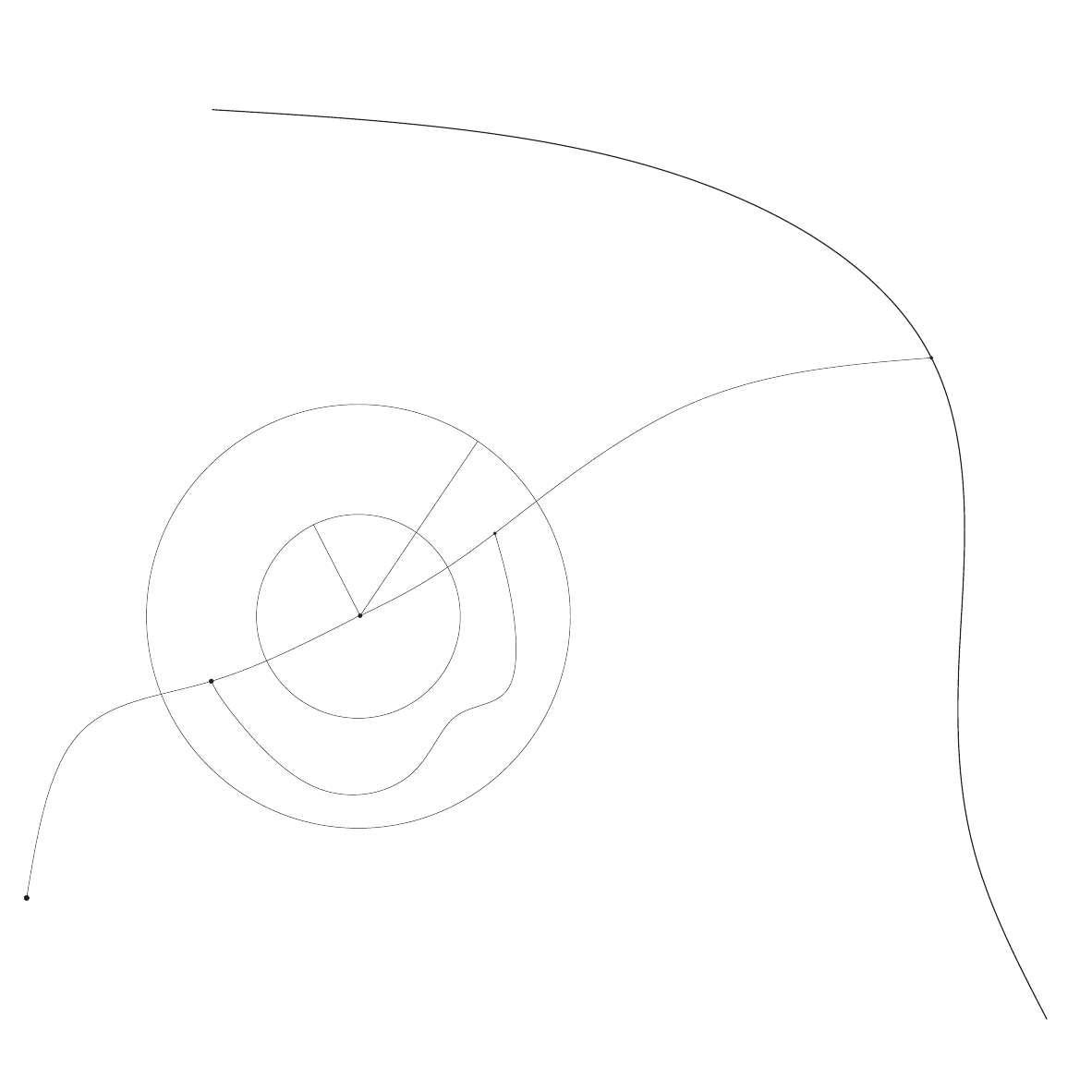}
\put (5,35) {$\displaystyle\alpha$}
\put (42,31) {$\displaystyle\beta$}
\put (87,67) {$\displaystyle q$}
\put (1,13) {$\displaystyle p$}
\put (37,57) {$\displaystyle r_{_0}$}
\put (27,46) {$\displaystyle \tilde{r}_{_0}$}
\put (32,40) {$\displaystyle p_{0}$}
\put (22,65) {$\displaystyle \mathbb{D}_{{r}_{_0}}$}
\put (17,45) {$\displaystyle \mathbb{D}_{\tilde{r}_{_0}}$}
\put (65,85) {\huge $\displaystyle \partial \Sigma $}
\end{overpic} 
\caption{Analysis of the sign of $v$.} 
\end{figure}

The next lemma deals with the complete case.

\begin{lemma}\label{propprinn} Under the same conditions of Theorem \ref{teoclaA} of Theorem \ref{teohipA} we have 
	$$
	\hess_{\Sigma}\Psi (x)(Y,Y) \geq 0,
	$$ 
	for all  $x\in \Sigma$ and $Y \in T_x\Sigma$. That is, the Hessian of $\Psi$ restrict to $\Sigma$ is positive semidefinite.
\end{lemma}

\begin{proof} We may assume that $\Sigma$ is not umbilical.
As in the previous lemma, it is enough to show that 
$v> 0$ at some point. 
	Let $\textbf{0}$ be the center of $\mathbb{B}^3_r$. If  $\Sigma$ does  pass through $\textbf{0}$ 
	we have immediately $v(\textbf{0})>0$ and we are done. 
	Otherwise we can consider a family of geodesic spheres $\mathbb S^2_\rho\subset\mathbb B_r^3$ 
	centered at $\textbf{0}$ which do not intersect $\Sigma$.
	Let $\tau=\sup\{\rho>0:  \mathbb{S}^2_\rho\cap \Sigma = \emptyset\}$ 
	and consider $q\in \mathbb{S}^2_\tau\cap \Sigma$. 
	
	If  $N_\tau$ denotes the orientation in $\mathbb{S}^2_\tau$ pointing to the center of $\mathbb{S}^2_\tau$
	we choose an orientation $N$ in $\Sigma$ in such way that $N=N_\tau$ at $q\in\Sigma$ and denote by 
	$H_\tau$ the mean curvature of $\mathbb{S}^2_\tau$ with respect to $\bar{g}$. 
	The tangency principle ensures that $H < H_\tau$. 
	Moreover, as $\vec{x}$ is pointing in opposite direction of $N$ at 
	$q \in \Sigma$, we have $\bar{g}(\vec{x},N)=-e^{u(\tau^2)}\tau$. 
	Since 
	$$
	H_\tau=\frac{2}{e^{u(\tau^2)}\tau} \mbox{ and } \sigma \geq 1 \mbox{ for } c\in \{-1,0\},
	$$
	it follows that, 
	\begin{equation*}
	v(q)=2+\dfrac{H}{\sigma}\bar{g}(\vec{x},N) > 2 - \frac{H_\tau}{\sigma}e^{u(\tau^2)}\tau = 2-\frac{2}{\sigma}\geq 0.
	\end{equation*}
\end{proof}

\section{Proofs of the main results}\label{proofs}
\subsection{Proof of Theorems \ref{teoclaA} and \ref{teohipA}} Let us then assume $\Sigma$ is not a totally umbilical and consider $\mathcal{C}=\{p\in\Sigma; \Psi(p)=\min_{_\Sigma}\,\Psi\}$ the set of minimum of the function $\Psi$ restricted to $\Sigma$. By Lemma $\ref{propprinn}$, we have that the function $\Psi_{|_\Sigma}$ is convex and consequently all critical points of $\Psi_{|{_\Sigma}}$ are contained in $\mathcal{C}$.

If $\mathcal{C}$ contains a single point $p$ we conclude that  $\Sigma$ is simply connected from standard Morse theory. So, let us suppose that $\mathcal{C}$ contains more than a one point and $\Sigma$ is not simply connected. Let $[\alpha]\in \pi_1(\Sigma,p)$ be a nontrivial homotopy class based at $p \in \mathcal{C}$. In this class we can find a geodesic loop $\gamma$ also based at $p$, and the convexity of $(\Psi\circ\gamma)(t)$ ensures that $\gamma \subset \mathcal{C}$. We claim that $\gamma$ is a regular curve. If
it is not the case we may have $\gamma'(0)\neq\gamma'(l)$ at $p$. Then we can choose $\varepsilon_0>0$ small and for each $\varepsilon<\varepsilon_0$ consider the minimizing geodesic $\tilde{\gamma}_\varepsilon$ joining $\gamma(l-\varepsilon)$ and $\gamma(0+\varepsilon)$. Again, the convexity of $(\Psi\circ\tilde{\gamma}_\varepsilon)(t)$ implies that $\tilde{\gamma}_\varepsilon\in \mathcal{C}$. Now, we can choose an open set $\mathcal{U}\subset \{\tilde{\gamma}_\varepsilon\}_{\varepsilon<\varepsilon_0}$ such that $\mathcal{U}\subset \Sigma\cap\mathbb{S}^2_{r_0}$, where $\mathbb{S}^2_{r_0}=\{x\in\mathbb{B}^3_r; \Psi(x)=\min_{_\Sigma}\Psi\}$, which is a contradiction because we are assuming that $\Sigma$ is not totally umbilical. Therefore $\gamma$ is regular at $p$, and thus a simple geodesic such that $\gamma([0,l])=\mathcal{C}$. That is, any geodesic loop based at $p$ must be contained in $\mathcal{C}$, and this implies that $\pi_1(\Sigma,p)\approx \mathbb{Z}$ and $\Sigma$ is homeomorphic to an annulus. Moreover, since $\mathcal{C}$ is a simple closed geodesic lying in $\Sigma\cap\mathbb{S}^2_{r_0}$, we conclude that $\mathcal{C}$ is a great circle in $\mathbb{S}^2_{r_0}$. Therefore, $\mathcal{C}$ is a geodesic circle  lying in a totally geodesic plane $\Pi$ passing through the center of $\mathbb{B}^3_r$. Using [\cite{SHMKS}, Lemma 4.2] we conclude that $\Sigma$ is a Delaunay surface meeting $\Pi$ orthogonally along the $\mathcal{C}$. 
Note that, as the rotation axis of $\Sigma$ passes through at the center of the circle  $\mathcal{C}$ and $\mathcal{C}$ is a great circle of a sphere $\mathbb{S}^2_{r_0}$ centered at the origin, we conclude that $\Sigma$ is centered at the origin.

Now, if $\Sigma\subset \mathbb{R}^3$ is simply connected and $H \neq 0$, then it follows by  Meeks-Tinaglia work \cite{meeks2016geometry} that $\Sigma$ is a round sphere. If  $\Sigma\subset \mathbb{R}^3$ is a Delaunay surface and $H \neq 0$ then it must be an unduloid or a right cylinder, because it is embedded. But according to Section \ref{examples}, unduloids does not satisfy the condition \ref{condcomplet1} and therefore, $\Sigma$ is a right cylinder. \begin{flushright}$\qed$  \end{flushright}

As a direct consequence of the proof we have the following more general version of Theorem \ref{teoclaA}.

\begin{theorem} Let $\mathbb{B}^3_r$ be the Euclidean three ball and let  $\bar{g}=e^{2h}\left\langle\cdot,\cdot\right\rangle$ be a conformally Euclidean metric where $h(x)=u(|x|^2)$ and $u:[0,r^2)\rightarrow\mathbb{R}$ is a smooth
function. Let $\sigma$ be the potential function associated with the conformal vector field $\vec{x}$ and suppose that $\sigma(x)\geq1$ for all $x\in \mathbb{B}^3_r$. If $\Sigma\subset (\mathbb{B}^3_r,\bar{g})$ is a complete properly embedded CMC surface whose umbilical points on $\Sigma$ are isolated unless and for all $x\in \Sigma$
$$
\frac{\|\Phi\|^2}{\sigma^2} \bar{g}(\vec{x},N)^2 
\leq \frac{1}{2}\left(2 + \frac{H}{\sigma}\bar{g}(\vec{x},N) \right)^2\,,	
$$
then, $\Sigma$ is either simply connected or homeomorphic to an annulus. 	
\end{theorem}

\subsection{Proof of Theorems \ref{teoprin} and \ref{teoprin002}}\label{proof}

The proof follows the same ideas as in \cite{AN}. As before, the curvature pinching condition implies that if the set  $\mathcal{C}=\{x \in \Sigma; \Psi(x)=\min_\Sigma \Psi\}$ has a single point, then $\pi_1(\Sigma,p)=0$ and $\Sigma$ is a topological disk. By Nitsche theorem \cite{N85} or its extension due to Ros and Souam in \cite{m}, we conclude that $\Sigma$ is a totally umbilical disk and assertion i) follows.

If $\mathcal{C}$ has more than one point then it is a simple closed geodesic $\gamma$ lying in $\Sigma\cap \mathbb{S}^2_{r_0}$. Moreover, $\gamma'$ is a principal direction of $\Sigma$. Let $\pi$ be the plane such that  $\gamma \subset \mathbb{S}^2_{r_0}\cap \pi$ and let $E$ be the unit normal vector to $\pi$. Let $V=\vec{x}\wedge E$ be the vector field induced by rotations of $(\mathbb{B}^3_r,\bar{g})$ around the direction of $E$. It is easy to see that the Lie derivative of the vector field $V$ with respect to $\bar{g}$ satisfies, 
$$
\mathcal{L}_{V}\bar{g}=0,
$$
and therefore $V$ is a Killing vector field with respect to the metric $\bar{g}$.
Define $v:\Sigma \rightarrow \R $ by 
$
v(x)=\bar{g}(V,N).
$
It is well known that $v$ is a solution to the Jacobi equation, that is,
\begin{equation}\label{eqche}
\Delta_\Sigma v + (2c + \|A\|^2) v = 0. 
\end{equation}
We also note that since $N(t)$ is parallel to $\gamma(t)$, 
$v$ vanishes along $\gamma$ and consequently,
\begin{equation*}\label{vnul1}
\displaystyle\dfrac{d}{dt}(v\circ\gamma)=0.
\end{equation*} 
Now, for each $t \in [0,1]$ let $\beta_t:(-\varepsilon,\varepsilon)\rightarrow \Sigma$ 
be a curve such that $\beta_t(0)=\gamma(t)$ and 
$\bar{g}( \frac{d\beta_t}{ds}(0), \gamma'(t)) =0.$
Consider $\tilde{N}(s)$ the restriction of the normal vector field $N$ to $\beta_t$. Then, 
\begin{eqnarray}\label{derv1}
\frac{d}{ds}(v \circ \beta_t)(s)_{|_{s=0}}&=&\frac{d}{ds}(e^{2h})(s)_{|_{s=0}}\langle V,\tilde{N} \rangle_{|_{s=0}}\nonumber\\ &+& e^{2h}\left\{\langle \beta_t'(0)\wedge E,\tilde{N}(0)\rangle + 
\langle \beta_t(0) \wedge E,\tilde{N}'(0)\rangle\right\}. 
\end{eqnarray}

Since $\vec{x}$ and $N$ are collinear along $\gamma(t)$, we have $\langle V,\tilde{N} \rangle_{|_{s=0}}=0$. Also, since $\gamma'(t)\bot\beta'_t(0)$, $\gamma'(t)\bot E$ and $\gamma'(t)\bot \tilde{N}(0)$ we obtain that $\beta'_t(0)$, $E$ and $\tilde{N}(0)$ are in the same plane and therefore, $\langle \beta'_t(0)\wedge E, \tilde{N}(0) \rangle=0$. On the other hand, since $\gamma'(t)$ is a principal direction of $\Sigma$ at $\gamma(t)$, we conclude that $\beta'_t(0)$ is the other principal direction of $\Sigma$ at $\beta_t(0)=\gamma(t)$, and therefore $\tilde{N}'(0)$ and $\beta'_t(0)$ are parallel. Thus,  $\gamma'(t)\bot \tilde{N}'(0)$,  $\gamma'(t)\bot E$, $\gamma'(t)\bot\beta_t(0)$ and again, we have $\langle \beta_t(0)\wedge E, \tilde{N}'(0) \rangle=0$. So, equation (\ref{derv1}) becomes 
$$
\displaystyle\dfrac{d}{ds}(v\circ\beta_t)(s)_{|_{s=0}}=0.
$$

Therefore, $\gamma(t)$ is a critical point of $v:\Sigma \rightarrow \R$, for all $t\in [0,1]$. Since the function $\Psi$ is radially increasing (see \cite{BPG}), we have that the set $\mathcal{C}$ is contained in the interior of $\Sigma$. By  \cite[Theorem 2.5]{ch}, 
the critical points in the nodal set $v^{-1}(0)$ of a non-trivial solution
to equation (\ref{eqche}) 
are isolated. 
Since this is not the case, we conclude that $v \equiv 0$. It means that the Killing vector field 
$V$ is tangent to $\Sigma$. 
Since $V$ is induced by rotations, it is equivalent to say that $\Sigma$ is a rotational surface. 
Note that the rotation axis of $\Sigma$ is orthogonal to $\pi$ and passes through the center of the great circle $\gamma \subset \Sigma \cap \mathbb{S}^2_{r_0}$. As $\mathbb{S}^2_{r_0}$ is centered at the origin, we conclude that  the rotation axis of $\Sigma$ passes through the origin and
it completes the proof.

\section{Construction of free boundary Delaunay surfaces}\label{examples}

In this section, we  show that there are some portions of Delaunay surfaces 
that are free boundary on the unit ball and satisfy  the pinching condition (\ref{condcomplet1}).
Recall that Delaunay surfaces are complete rotational surfaces in $\mathbb R^3$ with
constant mean curvature, and they come in a 2-parameter family $\mathcal D_{H,B}$, where
$H>0$ denotes the mean curvature and $B\geq 0,$ $B\neq 1$. 
If $0<B<1$, Delaunay surfaces are embedded and they are called  {\it unduloids}. 
If $B>1$ they are only immersed and called  {\it nodoids}.
If $B=0$ we get right cylinders and when $B\to 1$ they converge to a string of tangent spheres
with same radii.

In order to produce our examples we need to fix some notations and establish some lemmas. 
Let  $\beta(s) = (x(s), 0, z(s))$
be a smooth  curve parametrized by arc length in the $xz$-plane with $x(s)>0$, and let 
denote by $\Sigma$ the  surface obtained by rotation of $\beta$ around the $z$-axis. 
We start presenting sufficient conditions 
for a general rotational surface  to satisfy the pinching condition  (\ref{condcomplet1}).

\begin{lemma}\label{lecondgeral} 
Suppose that the curve $\beta$ satisfies the following conditions   
\begin{equation}\label{condgeral}
- 1  \leq x''(s)\bigg( x(s) - \frac{x'(s)}{z'(s)}z(s) \bigg), \,\, \mbox{if}\,\, z'(s) \neq 0, 
\end{equation}
\begin{equation}\label{condgeral001}
-1 \leq z(s)z''(s), \,\, \mbox{if}\,\, z'(s)= 0, \,and
\end{equation}
\begin{equation}\label{condgeral0001}
\quad - x(s)x'(s)^2 \leq z'(s)x'(s)z(s). 
\end{equation}
Then, $\Sigma$ satisfies the pinching condition
$$
\|\Phi\|^2 \left\langle x,N\right\rangle^2 \leq \frac{1}{2}(2+H\left\langle x,N\right\rangle)^2
$$
on $\Sigma$.
\end{lemma}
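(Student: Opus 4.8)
The plan is to reduce the pinching inequality to the nonnegativity of the product of the eigenvalues of $\hess_\Sigma\varphi$ and then to read off this nonnegativity directly from the three hypotheses on $\beta$. The starting point is the purely algebraic identity obtained in the proof of Proposition \ref{propprin},
\[
4\lambda_1\lambda_2=\big(2+H\langle\vec{x},N\rangle\big)^2-2|\Phi|^2\langle\vec{x},N\rangle^2,
\]
where $\lambda_i=1+k_i\langle\vec{x},N\rangle$ as in Lemma \ref{lemma}. This identity holds pointwise on any surface and makes no use of the pinching; consequently the inequality we must prove is \emph{equivalent} to $\lambda_1\lambda_2\geq 0$, and it suffices to establish $\lambda_1\geq 0$ and $\lambda_2\geq 0$ separately.

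Next I would use the standard parametrization $X(s,\theta)=(x(s)\cos\theta,x(s)\sin\theta,z(s))$ of $\Sigma$ and compute the relevant geometric data. Since $\beta$ is parametrized by arc length, $x'^2+z'^2=1$, and one finds the unit normal $N=(-z'\cos\theta,-z'\sin\theta,x')$, the principal curvatures $k_1=x'z''-x''z'$ in the meridian direction and $k_2=z'/x$ in the parallel direction, and the support function $\langle\vec{x},N\rangle=x'z-xz'$.

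For $\lambda_2$ the computation is immediate: substituting and using $1-z'^2=x'^2$ gives
\[
\lambda_2=1+\frac{z'}{x}\big(x'z-xz'\big)=\frac{x'\,(xx'+zz')}{x},
\]
so that, since $x>0$, the inequality $\lambda_2\geq 0$ is exactly hypothesis (\ref{condgeral0001}). For $\lambda_1$ I would differentiate the arc length relation to get $x'x''+z'z''=0$. Where $z'\neq 0$ this yields $k_1=-x''/z'$, hence
\[
\lambda_1=1+x''\Big(x-\frac{x'}{z'}z\Big),
\]
which is nonnegative precisely by (\ref{condgeral}); where $z'=0$ the relation forces $x'^2=1$, so $\lambda_1=1+zz''$, nonnegative precisely by (\ref{condgeral001}). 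Combining, $\lambda_1\lambda_2\geq 0$ on all of $\Sigma$, and the displayed identity then returns the pinching condition.

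The main point requiring care is the case analysis behind $\lambda_1$. Dividing the curvature by $z'$ is what forces the split into the two hypotheses (\ref{condgeral}) and (\ref{condgeral001}), and one should check that the two expressions for $\lambda_1$ are genuinely compatible across the set $\{z'=0\}$: differentiating $x'x''+z'z''=0$ shows that $x''$ vanishes together with $z'$, so $-x''/z'\to x'z''$ and $k_1$, hence $\lambda_1$, extends continuously, no information being lost on the transition set. Verifying this compatibility, together with keeping the sign conventions for $N$, $k_1$ and $k_2$ consistent throughout, is the only delicate step; the remainder is bookkeeping.
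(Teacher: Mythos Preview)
Your proposal is correct and follows essentially the same route as the paper: reduce the pinching inequality to $\lambda_1\lambda_2\ge 0$ via the algebraic identity from Proposition~\ref{propprin}, then verify $\lambda_1\ge 0$ and $\lambda_2\ge 0$ separately using the explicit rotational formulas for $k_1,k_2$ and $\langle\vec{x},N\rangle$, with the same case split on $z'$. Your factorization $\lambda_2=x'(xx'+zz')/x$ is a tidy variant of the paper's expression, and your closing remark on compatibility across $\{z'=0\}$ is extra (and not actually needed, since $\lambda_1=1+(x'z''-x''z')\langle\vec{x},N\rangle$ is smooth regardless); otherwise the arguments coincide.
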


\begin{proof} It suffices to show that,
$$
\lambda_1=1+k_1\left\langle x,N \right\rangle \geq 0 \quad \mbox{and} \quad 
\lambda_2=1+k_2(\left\langle x,N \right\rangle \geq 0,
$$
along $\gamma$.
A straightforward computation shows that $\left\langle x,N \right\rangle =x'(s)z(s) - x(s)z'(s)$ and 
the principal curvatures of $\Sigma$ are given by 
$k_1= x'(s)z''(s) - x''(s)z'(s)$ and $k_2=\frac{z'(s)}{x(s)}$. 

If $z'(s)\neq 0$, we may write $k_1 = -\frac {x''(s)}{z'(s)}$. So we have
$\lambda_1 = 1 + x''(s) \big( x(s) - \frac{x'(s)}{z'(s)}z(s) \big)$ and 
condition (\ref{condgeral}) ensures that $\lambda_1(s)\geq 0$.

If $z'(s) =0$, then $x'(s)^2=1$, $k_1=x'(s)z''(s)$ and therefore 
$\lambda_1(s)=1+x'(s)^2z(s)z''(s)=1+z(s)z''(s)\geq 0$ by condition (\ref{condgeral001}).

Finally,
\begin{eqnarray*}\label{authess2}
\lambda_2(s) 
             &=& \frac{x(s) + z'(s)x'(s)z(s) - x(s)z'(s)^2}{x(s)}\\
						 &=& \frac{z'(s)x'(s)z(s) + x(s)x'(s)^2}{x(s)}	
\end{eqnarray*}
and condition $(\ref{condgeral0001})$ implies that $\lambda_2(s)\geq 0$ as desired. 
\end{proof}

The function $g(s)=x(s) - \frac{x'(s)}{z'(s)}z(s)$ that appears in $(\ref{condgeral})$ 
has an important geometric meaning.
In fact, if $g(s_0)=0,$ then 
\[
\frac{x(s_0)}{z(s_0)}=\frac{x'(s_0)}{z'(s_0)}.
\]
That is,  the directions determined by the position vector and 
velocity vector of $\beta$ at $\beta(s_0)$ are parallel, and thus $\Sigma$ is orthogonal to 
the sphere of radius $R=\|\beta(s_0)\|$. In particular we have the following lemma.

\begin{lemma}\label{fung} Assume that $\beta(s)$ is defined for $s\in[a,b]$ and 
considere $\mathcal{Z}=\{s\in [a,b]; z'(s)=0\}$.
Define the function $g:[a,b]\setminus \mathcal{Z} \rightarrow\R$ by 

\begin{equation}\label{funcg}
g(s):= x(s) - \frac{x'(s)}{z'(s)}z(s).
\end{equation}

Let $s_1<s_2$ be two values in $[a,b]$ such that:  
\begin{enumerate}
\item[i)]  $g(s_1)=g(s_2)=0$,

\item[ii)] $x^2(s_1)+z^2(s_1)=x^2(s_2)+z^2(s_2)=:R^2$ and  

\item[iii)] $x^2(s)+z^2(s)<R^2$ for all $s\in (s_1,s_2).$

\end{enumerate}
Then, the rotation of $\beta_{|_{[s_1,s_2]}}$ produces a free boundary surface inside 
the ball of radius $R$.  
\end{lemma}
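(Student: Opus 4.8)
The plan is to verify directly the three defining features of a free boundary surface in the ball of radius $R$ for the annulus $\Sigma$ obtained by rotating $\beta|_{[s_1,s_2]}$ about the $z$-axis: that $\Sigma$ lies in the closed ball $\overline{B_R}$, that its boundary lies on the sphere $\S^2(R)$, and that $\Sigma$ meets $\S^2(R)$ orthogonally along $\partial\Sigma$. Here $B_R$ denotes the open ball of radius $R$ centered at the origin, with $\partial B_R=\S^2(R)$.

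First I would note that rotation about the $z$-axis preserves the distance to the origin: a point of $\Sigma$ has the form $(x(s)\cos\theta,\,x(s)\sin\theta,\,z(s))$, whose squared norm is $x^2(s)+z^2(s)$, independent of $\theta$. Hence the circle traced by $\beta(s)$ lies on the sphere of radius $\sqrt{x^2(s)+z^2(s)}$. Hypotheses (ii) and (iii) then translate immediately: the two boundary circles, at $s=s_1$ and $s=s_2$, lie exactly on $\S^2(R)$, so $\partial\Sigma\subset\S^2(R)$; while for $s\in(s_1,s_2)$ every point has norm strictly less than $R$, so $\inn(\Sigma)\subset B_R$. Since $x(s)>0$ on $[s_1,s_2]$, the profile never meets the axis, so $\Sigma$ is a compact orientable (immersed) annulus contained in $\overline{B_R}$.

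For the orthogonality (free boundary) condition I would invoke the geometric reading of $g$ recorded just before the statement. Hypothesis (i) presupposes $z'(s_i)\neq 0$ for $i=1,2$ (otherwise $g(s_i)$ is undefined), and $g(s_i)=0$ gives $x(s_i)z'(s_i)=x'(s_i)z(s_i)$. Using the formula $\langle\vec{x},N\rangle=x'(s)z(s)-x(s)z'(s)$ from Lemma \ref{lecondgeral}, this yields $\langle\vec{x},N\rangle(s_i)=0$ at both boundary parameters. Since the outward unit normal of $\S^2(R)$ at a point $\vec{x}$ is the radial vector $\vec{x}/R$, the identity $\langle\vec{x},N\rangle=0$ says precisely that $N$ is tangent to $\S^2(R)$, i.e. the tangent plane of $\Sigma$ is orthogonal to that of the sphere along $\partial\Sigma$. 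Combined with the containment from the previous step, this is exactly the free boundary condition.

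This statement is essentially bookkeeping, so I do not anticipate a genuine obstacle; the only point deserving care is that $g$ is defined off the set $\mathcal{Z}$ where $z'=0$, so one must explicitly record that hypothesis (i) already forces $z'(s_1),z'(s_2)\neq 0$ before clearing denominators to pass from $g(s_i)=0$ to $\langle\vec{x},N\rangle(s_i)=0$. All the substantive geometric content—that rotation preserves the distance to the origin, and that $\langle\vec{x},N\rangle=0$ encodes an orthogonal meeting with the centered sphere—has already been isolated in the discussion preceding the statement.
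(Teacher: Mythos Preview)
Your argument is correct and matches the paper's approach: the paper does not supply a separate proof of this lemma, relying instead on the remark immediately preceding it that $g(s_0)=0$ forces the position vector and velocity vector of $\beta$ to be parallel, hence $\Sigma$ meets the sphere of radius $\|\beta(s_0)\|$ orthogonally. Your write-up simply makes explicit the remaining bookkeeping (containment in $\overline{B_R}$ from (ii)--(iii), and the equivalence $g(s_i)=0\Leftrightarrow\langle\vec{x},N\rangle(s_i)=0$), which is exactly what the paper leaves implicit.
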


We recall that the generatrix curve of a Delaunay surface with parameters $H$ and $B$ 
can be parametrized by 
$\beta(s) = (x(s), 0, z(s))$, where
\[
x(s)=\frac{1}{H}\sqrt{1+B^2+2B\sin(Hs+\frac{3\pi}{2})} 
\]
and
\[
z(s)=\int_{\frac{3\pi}{2H}}^{s+\frac{3\pi}{2H}}{\frac{1+B\sin(Ht)}{\sqrt{1+B^2+2B\sin(Ht)}}}dt.
\]
We point out that theses functions differ from those in \cite{K80} by a translation and a change
of parameters, and we do that in order to have the neck of the surface on the plane $z=0$ at $s=0.$
Let us assume that $0<B<1$. The key observation in this case is that the function 
$z$ satisfies $z'(s)>0$ for all $s$. Let $s_0$ be 
the smaller positive value such that  $x''(s_0)=0$. 
One can easily check that  
$s_0=s_0(H,B)=\dfrac{1}{H}\sin^{-1}(-B) - \dfrac{3\pi}{2H}$, where 
$\sin^{-1}:[-1,0] \rightarrow [\frac{3\pi}{2},2\pi]$. 
Thus, given $s \in \left(-s_0,s_0 \right)$ we have $z'(s)>0$ and $x''(s)>0$. 
We need the following observations.

\begin{lemma}\label{propfuncg}
Fix $0<B<1$, $H>0$, and consider the function $g:[-s_0,s_0] \rightarrow \R $ given by $(\ref{funcg})$. 
Then, 

\begin{enumerate}
\item[i)] $g(0)>0.$ 

\item[ii)]  $g'(0)=g'(s_0)=0$.

\item[iii)]  $g$ is increasing in $(-s_0,0)$ and decreasing in $(0,s_0)$.
\end{enumerate}
\end{lemma}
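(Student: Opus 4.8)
The plan is to deduce all three items from a single compact formula for $g'$, after which the lemma becomes a matter of reading off signs. First I would record the elementary consequences of the parametrization. Using $\sin(Hs+\tfrac{3\pi}{2})=-\cos(Hs)$ one has $x(s)=\tfrac1H\sqrt{1+B^2-2B\cos(Hs)}$, an even function with $x(0)=\tfrac{1-B}{H}>0$ and, since $x'(s)=\tfrac{B\sin(Hs)}{\sqrt{1+B^2-2B\cos(Hs)}}$, with $x'(0)=0$. Substituting into (\ref{funcg}) gives $g(0)=x(0)-\tfrac{x'(0)}{z'(0)}z(0)=x(0)=\tfrac{1-B}{H}>0$, which is exactly item (i). (Note $g$ is well defined on $[-s_0,s_0]$ because $z'(s)>0$ there, a fact already established.)

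Next I would compute $g'$. Differentiating $g=x-\tfrac{x'}{z'}z$, the terms $x'$ and $\tfrac{x'}{z'}z'$ cancel, leaving $g'=-\big(\tfrac{x'}{z'}\big)'z=-\tfrac{x''z'-x'z''}{(z')^2}\,z$. Since $\beta$ is parametrized by arc length, differentiating $x'^2+z'^2=1$ gives $z'z''=-x'x''$, hence $z''=-\tfrac{x'x''}{z'}$ (legitimate as $z'\neq 0$). Substituting, the numerator collapses via $x''z'-x'z''=\tfrac{x''}{z'}\big(x'^2+z'^2\big)=\tfrac{x''}{z'}$, so that
\[
g'(s)=-\frac{x''(s)\,z(s)}{z'(s)^{3}}.
\]
I expect this cancellation, together with the use of the arc-length identity to eliminate $z''$, to be the only genuine computation in the proof and the step most likely to hide an algebra slip; everything after it is bookkeeping of signs.

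With the formula in hand the rest is a sign count on $[-s_0,s_0]$. We already know $z'>0$ there, so $z'^{3}>0$; since $z(0)=0$ and $z$ is increasing (indeed $z$ is odd), we get $z(s)<0$ on $[-s_0,0)$ and $z(s)>0$ on $(0,s_0]$. We also know $x''>0$ on the open interval $(-s_0,s_0)$, while $x''(\pm s_0)=0$ by the definition of $s_0$ together with the evenness of $x$. Because the denominator is positive, the sign of $g'$ equals the sign of $-x''z$: on $(-s_0,0)$ this is $-(+)(-)>0$, and on $(0,s_0)$ it is $-(+)(+)<0$. This is item (iii). Finally $g'(0)=0$ because $z(0)=0$, and $g'(s_0)=0$ because $x''(s_0)=0$, which is item (ii).

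If a cleaner write-up is wanted, I would observe at the start that $x$ is even and $z$ is odd, so $g$ is even and $g'$ is odd; this lets one analyze only $[0,s_0]$ and reflect, and it makes the symmetry of (iii) and the vanishing $g'(-s_0)=-g'(s_0)=0$ automatic. In either presentation the substance is the formula for $g'$, so I would foreground that derivation and treat the sign analysis as an immediate corollary of the already-recorded facts $z'>0$ and $x''>0$ on $(-s_0,s_0)$.
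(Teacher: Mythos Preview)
Your proof is correct and follows exactly the paper's approach: derive the compact formula $g'(s)=-\dfrac{x''(s)\,z(s)}{z'(s)^{3}}$ from the arc-length identity, then read off items (i)--(iii) from the known signs of $x''$, $z'$ and $z$ on $(-s_0,s_0)$. If anything, you are slightly more explicit than the paper, which states (iii) as a consequence of $x''>0$ and $z'>0$ without spelling out that one must also use $z(s)<0$ on $(-s_0,0)$ and $z(s)>0$ on $(0,s_0)$; your observation that $z$ is odd (equivalently $z(0)=0$ and $z'$ even) is exactly what is needed and is consistent with facts the paper records elsewhere.
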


\begin{proof} Assertion i) follows directly since $g(0) =  \frac{1-B}{H} > 0.$
To proof assertion ii) we observe that, since $\beta$ is parametrized by arc length, then
\begin{equation}\label{glinha}
g'(s) =-\frac{x''(s)}{z'(s)^3}z(s).
\end{equation}
Finally, assertion iii) follows from equation (\ref{glinha}), because
$x''(s)$ and $z'(s)$ are positive in $(-s_0,s_0)$. 
\end{proof}

\begin{proposition}\label{lemafunda}
Fix $0<B<1$, $H>0$,  and set  $z_0=\frac{1-B^2}{HB}$. Then  we have:
\begin{enumerate}
\item[i)] If $z(s_0)<z_0$,  then $g(s)>0$ for all $s\in (0,s_0)$.

\item[ii)] If $z(s_0) \geq z_0$, then $g(\bar{s})=0$ for some $\bar{s} \in (0,s_0]$. 
In particular, the surface obtained by rotation of ${\beta}$ is a free boundary CMC surface in 
$\mathbb B^3_{R_0}$, where $R_0^2=x^2(\bar{s})+z^2(\bar{s})$, and satisfies the pinching condition
(\ref{condcomplet1}).
\end{enumerate}
\end{proposition}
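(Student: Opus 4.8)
The plan is to reduce the whole statement to the behaviour of $g$ on $(0,s_0)$, where Lemma \ref{propfuncg} already tells me that $g(0)=\frac{1-B}{H}>0$ and that $g$ is strictly decreasing. Consequently the sign of $g$ throughout $(0,s_0)$ is governed by its value at the right endpoint, and the proposition will follow once I compute $g(s_0)$ explicitly and compare it with $z_0$. The decisive step is therefore the evaluation of $g(s_0)$.

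First I would use the defining relation $x''(s_0)=0$. Writing $\theta=Hs+\tfrac{3\pi}{2}$ and differentiating the given expression for $x$, one finds that $x''$ is a negative multiple of $(\sin\theta+B)(1+B\sin\theta)$; since $1+B\sin\theta=\sqrt{1+B^2+2B\sin\theta}\,z'>0$ on $(-s_0,s_0)$, the condition $x''(s_0)=0$ forces $\sin\theta(s_0)=-B$, with $\cos\theta(s_0)=\sqrt{1-B^2}>0$ because $\theta(s_0)$ lies in the fourth quadrant. Substituting into the formulas for $x,x',z'$ gives $x(s_0)=\tfrac{1}{H}\sqrt{1-B^2}$, $x'(s_0)=B$ and $z'(s_0)=\sqrt{1-B^2}$, and plugging these into (\ref{funcg}) yields the clean identity
\[
g(s_0)=\frac{\sqrt{1-B^2}}{H}-\frac{B}{\sqrt{1-B^2}}\,z(s_0)=\frac{B}{\sqrt{1-B^2}}\big(z_0-z(s_0)\big),
\]
so that $g(s_0)$ is positive, zero, or negative exactly according as $z(s_0)<z_0$, $=z_0$, or $>z_0$.

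Part i) is then immediate: if $z(s_0)<z_0$ then $g(s_0)>0$, and since $g$ is decreasing on $(0,s_0)$ we get $g(s)>g(s_0)>0$ for all $s\in(0,s_0)$. For part ii), if $z(s_0)\ge z_0$ then $g(s_0)\le 0$ while $g(0)>0$, so by the intermediate value theorem there is $\bar s\in(0,s_0]$ with $g(\bar s)=0$; this is the value at which the profile meets the sphere of radius $R_0$, $R_0^2=x^2(\bar s)+z^2(\bar s)$, orthogonally. To assemble a genuine free boundary cap I would exploit the symmetry about the neck $s=0$: from the explicit formulas $x$ is even and $z$ is odd, hence $\|\beta\|^2$ is even and, by (\ref{funcg}), $g$ is even, so $g(-\bar s)=0$ as well. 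Taking $s_1=-\bar s$, $s_2=\bar s$ verifies hypotheses i) and ii) of Lemma \ref{fung}. For the strict inequality iii) I would note that on $(0,s_0)$ both $x'=\tfrac{B\cos\theta}{\sqrt u}>0$ (as $\theta$ stays in the fourth quadrant) and $z'>0$, so $\tfrac{d}{ds}\|\beta\|^2=2(xx'+zz')>0$ there; by evenness $\|\beta\|^2$ is strictly increasing away from its minimum at $s=0$, giving $\|\beta(s)\|^2<R_0^2$ for all $s\in(-\bar s,\bar s)$. Lemma \ref{fung} then produces the free boundary CMC surface inside $\mathbb B^3_{R_0}$.

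It remains to check the pinching (\ref{eqteoprin}) through Lemma \ref{lecondgeral} on $[-\bar s,\bar s]\subset(-s_0,s_0)$, where $z'>0$ renders (\ref{condgeral001}) vacuous. Condition (\ref{condgeral}) holds because $x''>0$ on $(-s_0,s_0)$ while $g\ge 0$ on $[-\bar s,\bar s]$ (being even, positive inside, and vanishing at $\pm\bar s$), so $x''g\ge 0\ge -1$; and (\ref{condgeral0001}) rearranges to $x'(xx'+zz')\ge 0$, which holds since $x'$ and $xx'+zz'$ are both odd and share the same sign on each side of the neck. The main obstacle I anticipate is not any single inequality but keeping the trigonometric bookkeeping consistent — in particular pinning down $\sin\theta(s_0)=-B$ and the resulting values of $x,x',z'$, since these feed both the identity for $g(s_0)$ and the monotonicity of $\|\beta\|^2$ on which the two parts of the argument rest.
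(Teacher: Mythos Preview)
Your argument is correct and follows essentially the same route as the paper: compute $x(s_0),x'(s_0),z'(s_0)$ from $\sin\theta(s_0)=-B$ to control the sign of $g(s_0)$, use the monotonicity of $g$ from Lemma~\ref{propfuncg} together with the intermediate value theorem, exploit the even/odd symmetry of $x$ and $z$ and the positivity of $x',z'$ on $(0,\bar s]$ to feed Lemma~\ref{fung}, and finish via Lemma~\ref{lecondgeral}. Your explicit identity $g(s_0)=\tfrac{B}{\sqrt{1-B^2}}\big(z_0-z(s_0)\big)$ and your spelled-out verification of (\ref{condgeral})--(\ref{condgeral0001}) are slightly more detailed than the paper's treatment, but the strategy is the same.
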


\begin{proof} i) Since $g$ is decreasing in $(0,s_0)$  we have 
$g(s_0)<g(s)$ for all $s \in (0,s_0)$. 
On the other hand,  since $\sin(H s_0+\frac{3\pi}{2})=-B$ and $z(s_0)<z_0$, we have 
\begin{eqnarray*}
g(s_0)&=& x(s_0) -\frac{x'(s_0)}{z'(s_0)}z(s_0) \\ 
      &>& x(s_0) -\frac{x'(s_0)}{z'(s_0)}z_0 \\
			&=& \frac{\sqrt{1-B^2}}{H} -\frac{B\sqrt{1-B^2}}{1-B^2} \frac{(1-B^2)}{BH}=0,
\end{eqnarray*}
and therefore $g(s)>0$.

ii) If $z(s_0)\geq z_0$, then  we get $g(s_0)\leq 0$. 
By assertion i) of Lemma \ref{propfuncg}, $g(0)>0$, and so there is $\bar{s} \in (0,s_0]$ 
such that $g(\bar{s})=0$. 
On the other hand
$x'(-s)=-x'(s)$ and $z'(-s)=z'(s)$, and thus, $g(-\bar{s})=g(\bar{s})=0$. 
Moreover,  $x'(0)=0$ and $x''(s)>0$ imply that $x'(s)>0$ for all $s \in (0,\bar{s}]$. 
Therefore, $x'(s)>0$ and $z'(s)>0$ and it ensures  $x^2(s)+z^2(s)<R_0^2:=x^2(\bar{s})+z^2(\bar{s})$ for
all $s \in (0,\bar{s}]$. 
Because the curve $\beta$ is symmetric with 
respect to $x$-axis we get $x^2(s)+z^2(s)\leq R_0^2$ for all $s \in [-\bar{s},\bar{s}]$ and we
conclude that the surface is free boundary by Lemma \ref{fung}. 
To prove that it satisfies the pinching condition one can easily check that all conditions of Lemma
\ref{lecondgeral} are satisfied.  
\end{proof}
Since the pinching condition (\ref{condcomplet1}) and the free boundary condition are invariant by dilatations of the Euclidean metric, we can construct such examples in the unit ball $\mathbb B^3.$
In the next we present a concrete example.

\begin{example} Choosing $B=0.9$ and $H=0.1$ we have $z_0=\frac{1-B^2}{HB}=2.\bar{1}$ and
$s_0=10\sin^{-1}(-0.9) + 5\pi \approx 4.51026$. Thus we get 
$$
z(s_0)=\displaystyle\int_{15\pi}^{4.51026+15\pi}\left({\frac{1+(0.9)\sin(0.1t)}{\sqrt{1+(0.9)^2+(1.8)\sin(0.1t)}}}\right)dt
\approx 2.71697.
$$
Therefore, $z(s_0)\geq z_0$, and by  Lemma \ref{lemafunda} there is $\bar{s}\in(0,s_0]$ such that the 
portion of  
Delaunay surface corresponding to the revolution of $\beta(s)$ for $s\in [-\bar s, \bar s]$ is a 
CMC annulus in the ball $\mathbb B_{R_0}$, where $R_0=x^2(\bar{s})+z^2(\bar{s})$.   
\end{example}

The next proposition says essentially that there are portions of nodoids that are free 
boundary in the ball and satisfy the conditions of Lemma \ref{lecondgeral}. 
The proof follows the same spirit as in Lemma \ref{propfuncg} and Proposition \ref{lemafunda}.

\begin{proposition} Fix $B>1$ and $H>0$ and
consider $x$, $z$ and $g$ as above and defined in the interval $I_0=(-r_0,r_0)$, 
where $r_0$ the smallest positive value  such that $z'(r_0)=0$. 
Then, there exists $\bar{r} \in (0,r_0)$ such that $g(\bar{r})=g(-\bar{r})=0$. Moreover, 
for all $r\in[-\bar{r},\bar{r}]$ we have $g(r) \geq 0$, $x''(r)>0$ and $x'(r)z(r) \leq 0$.

In particular, the surface obtained by rotation of ${\beta}$ restrict to $I_0$ is a free boundary CMC surface in 
$\mathbb B^3_{R_0}$, where $R_0^2=x^2(\bar{s})+z^2(\bar{s})$, and satisfies the pinching condition
(\ref{condcomplet1}). 
\end{proposition}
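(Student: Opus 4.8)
The plan is to follow the scheme of Lemma~\ref{propfuncg} and Proposition~\ref{lemafunda}, after isolating the structural feature that distinguishes nodoids from unduloids. Using $\sin(Hs+\tfrac{3\pi}{2})=-\cos(Hs)$ I would first record the formulas $x(s)=\tfrac{1}{H}\sqrt{1+B^2-2B\cos(Hs)}$, $x'(s)=\tfrac{B\sin(Hs)}{Hx(s)}$ and $z'(s)=\tfrac{1-B\cos(Hs)}{Hx(s)}$, together with the parities: $x$ and $z'$ are even, while $x'$ and $z$ are odd (so $x''$ is even and $g$ is even), and $z(0)=0$. The decisive point is that $z'(0)=-1$ and, since $r_0=\tfrac{1}{H}\arccos(\tfrac{1}{B})$ is the first positive zero of $z'$, one has $\cos(Hs)>\tfrac{1}{B}$ on $I_0$, hence $1-B\cos(Hs)<0$ and $z'(s)<0$ \emph{throughout} $I_0$ --- the reverse of the unduloid situation. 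Consequently $z(s)<0$ on $(0,r_0)$ and $z(s)>0$ on $(-r_0,0)$. I would also check that $x''>0$ on $I_0$ by noting that the sign of $x''$ coincides with that of $(1-Bc)(c-B)$, where $c=\cos(Hs)$, and both factors are negative on $I_0$.

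Next I would analyze $g$ exactly as in Lemma~\ref{propfuncg}, via the arc length identity $g'(s)=-\tfrac{x''(s)}{z'(s)^3}z(s)$ of equation~(\ref{glinha}). Since now $x''>0$ and $(z')^3<0$, the sign of $g'$ is that of $z$, so $g$ is increasing on $(-r_0,0)$ and decreasing on $(0,r_0)$, with interior maximum $g(0)=x(0)=\tfrac{B-1}{H}>0$. As $s\to r_0^-$ one has $z'\to0^-$ while $x'(r_0)>0$ and $z(r_0)<0$, so $-\tfrac{x'}{z'}z\to-\infty$ and hence $g(s)\to-\infty$; the intermediate value theorem then yields a unique $\bar r\in(0,r_0)$ with $g(\bar r)=0$, and evenness gives $g(-\bar r)=0$. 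As $g$ is unimodal with these two boundary zeros, $g\ge0$ on $[-\bar r,\bar r]$. This establishes the three pointwise assertions of the statement: $g\ge0$ as just shown, $x''>0$ from the first step, and $x'(s)z(s)\le0$ because $x'$ and $z$ have opposite signs on each side of the neck and both vanish at $s=0$.

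Finally I would read off the two geometric conclusions. For the free boundary property I would invoke Lemma~\ref{fung} with $s_1=-\bar r$ and $s_2=\bar r$: hypothesis (i) is $g(\pm\bar r)=0$, hypothesis (ii) is the evenness of $x^2+z^2$, and hypothesis (iii) follows because $\tfrac{d}{ds}(x^2+z^2)=2(xx'+zz')>0$ on $(0,\bar r)$ --- indeed $xx'>0$ since $x,x'>0$ and $zz'>0$ since $z,z'<0$ --- with the symmetric inequality on $(-\bar r,0)$, so that $\|\beta\|^2<R_0^2:=x^2(\bar r)+z^2(\bar r)$ in the interior. For the pinching I would verify the hypotheses of Lemma~\ref{lecondgeral} on $[-\bar r,\bar r]$: condition~(\ref{condgeral}) reads $-1\le x''g$, valid since $x''g\ge0$; condition~(\ref{condgeral001}) is vacuous because $z'\neq0$ on $I_0$; and condition~(\ref{condgeral0001}), i.e.\ $z'x'z+x(x')^2\ge0$, holds since $z'\,(x'z)\ge0$ is a product of the nonpositive quantities $z'$ and $x'z$, while $x(x')^2\ge0$. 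The main obstacle is conceptual rather than computational: one must recognize the sign reversal $z'<0$ proper to nodoids and propagate it consistently, and then handle the blow-down $g\to-\infty$ at the endpoint $r_0$ (where $z'$ vanishes) in order to locate $\bar r$.
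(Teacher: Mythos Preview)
Your proof is correct and follows exactly the route the paper indicates: the paper itself only states that ``the proof follows the same spirit as in Lemma~\ref{propfuncg} and Proposition~\ref{lemafunda}'' without supplying details, and you have carried out precisely that program, correctly handling the nodoid sign reversal $z'<0$ on $I_0$ and the blow-up $g\to-\infty$ near $r_0$ to locate $\bar r$.
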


To conclude this section we observe that there are points in the unduloids that do not satisfy the pinching condition.
To see this, we define the sequence $p_{n}=\beta(t_n)$
of points on  $\beta$ where
$$
t_n= -\frac{1}{H}\mbox{sin}^{-1}(-B)+\frac{4n\pi+3\pi}{2H}.
$$
That is, the points $t_n$ are defined in order to satisfy 
$\sin(Ht_n + \frac{3 \pi}{2})=-B,$ for all   $n\in \mathbb{N}.$
Moreover,  we have
$$
x(t_n)=\frac{\sqrt{1-B^2}}{H}, 
$$
$$
x'(t_n)=-B,\quad z'(t_n)=\sqrt{1-B^2} 
$$ 
and
$$
x''(t_n)=0.
$$
Another important observation is that the function $z=z(s)$ goes to infinity when $s$ is arbitrary larger. 
Thus, $z(t_n) \rightarrow \infty$ when $n \rightarrow \infty$. 
With these notations we have:

\begin{lemma} Fix $0<B<1$ and $H>0$ and consider  the sequence $p_n=\beta(t_n)$  as above.
Then, there is $n_0 \in \mathbb{N}$ such that the inequality 
\begin{equation}\label{ad001}
\|\Phi\|^2 \left\langle x,N\right\rangle^2 > \frac{1}{2}(2+H\left\langle x,N\right\rangle)^2
\end{equation}
is satisfied for  $p_n,$ for all $n \geq n_0$. 
\end{lemma}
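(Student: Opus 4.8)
The plan is to recast the claimed violation of the pinching as a statement about the \emph{signs} of the two eigenvalues of $\hess_\Sigma\varphi$ at the points $p_n=\beta(t_n)$. Recall from the computation in the proof of Proposition \ref{propprin} the algebraic identity
$$
4\lambda_1\lambda_2=\big(2+H\langle\vec{x},N\rangle\big)^2-2|\Phi|^2\langle\vec{x},N\rangle^2,
$$
where $\lambda_i=1+k_i\langle\vec{x},N\rangle$. Hence the strict inequality (\ref{ad001}) holds at a point exactly when $\lambda_1\lambda_2<0$ there, i.e.\ when $\lambda_1$ and $\lambda_2$ have strictly opposite signs. (Note that the product $k_i\langle\vec{x},N\rangle$, and therefore each $\lambda_i$, is independent of the choice of orientation for $N$, so the signs are unambiguous.) Thus it suffices to evaluate $\lambda_1$ and $\lambda_2$ at $p_n$ and show that, for $n$ large, one is positive and the other negative.

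First I would compute $\lambda_1$. Using $k_1=x''(s)/z'(s)$, which is legitimate since $z'(t_n)=\sqrt{1-B^2}\neq 0$, together with the vanishing $x''(t_n)=0$ recorded above, we get $k_1(t_n)=0$ and hence $\lambda_1(t_n)=1>0$ for every $n$. So the sign of the product is governed entirely by $\lambda_2$.

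Next I would compute $\lambda_2$ at $p_n$. Substituting the listed values $x(t_n)=\sqrt{1-B^2}/H$, $x'(t_n)=-B$ and $z'(t_n)=\sqrt{1-B^2}$ into $k_2=z'(s)/x(s)$ gives $k_2(t_n)=H$, while
$$
\langle\vec{x},N\rangle(t_n)=x'(t_n)z(t_n)-x(t_n)z'(t_n)=-B\,z(t_n)-\frac{1-B^2}{H}.
$$
Therefore
$$
\lambda_2(t_n)=1+H\langle\vec{x},N\rangle(t_n)=B^2-HB\,z(t_n)=B\big(B-Hz(t_n)\big),
$$
which, since $0<B<1$, is negative precisely when $z(t_n)>B/H$.

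Finally, I would invoke the fact recorded before the statement that $z(t_n)\to\infty$ as $n\to\infty$, which reflects that the inflection points $p_n$ (where $x''=0$) march off to infinity along the axis of the unduloid. Hence there is $n_0$ with $z(t_n)>B/H$ for all $n\geq n_0$, giving $\lambda_2(t_n)<0$ while $\lambda_1(t_n)=1>0$; thus $\lambda_1\lambda_2<0$ and the strict inequality (\ref{ad001}) holds at every such $p_n$. I do not expect a genuine obstacle here: the conceptual key is the reformulation via $\lambda_1\lambda_2<0$, after which everything reduces to the elementary observation that the constant $1$ in $\lambda_2$ is eventually dominated by the growing term $-HB\,z(t_n)$; the only nontrivial external input is the divergence $z(t_n)\to\infty$, and the remainder is bookkeeping with the formulas of Section \ref{pre}.
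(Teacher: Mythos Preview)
Your proposal is correct and follows essentially the same approach as the paper: both invoke the identity $4\lambda_1\lambda_2=(2+H\langle\vec{x},N\rangle)^2-2|\Phi|^2\langle\vec{x},N\rangle^2$ from the proof of Proposition~\ref{propprin}, compute $\lambda_1(t_n)=1$ from $x''(t_n)=0$, compute $\lambda_2(t_n)=B^2-HB\,z(t_n)$, and conclude from $z(t_n)\to\infty$. The only cosmetic difference is that the paper obtains $\lambda_2$ via the formula from Lemma~\ref{lecondgeral} rather than directly via $k_2$ and $\langle\vec{x},N\rangle$, arriving at the equivalent expression $BH\big(\tfrac{B}{H}-z(t_n)\big)$.
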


\begin{proof} 
Since $z'(s)\neq 0$ we can write the eigenvalues of $ \hess_{\Sigma}\varphi$ as 
\begin{eqnarray*}
\lambda_1(t_n)&=&1 + x''(t_n) \bigg( x(t_n) - \frac{x'(t_n)}{z'(t_n)}z(t_n) \bigg) = 1.
\end{eqnarray*}
and
\begin{eqnarray*}
\lambda_2(t_{n}) &=& \frac{x(t_n)x'(t_n)^2 + z'(t_n)x'(t_n)z(t_n)}{x(t_n)} = BH\Big(\frac{B}{H} - z(t_n)\Big).		
\end{eqnarray*}

Since $z(t_n)$ $\rightarrow \infty$ when $n \rightarrow \infty$, we can choose $n_0$ such that
$z(t_{n})>\frac{B}{H}$ for all $ n \geq n_0$, and consequently $\lambda_2(t_{n})<0$. 
Thus,  $\lambda_1(t_{n})\lambda_2(t_{n})<0$,  and by the proof of Lemma \ref{propprin1}, 
the inequality (\ref{ad001}) holds at  $p_n$ as desired.
\end{proof}

Once we have a divergent sequence of  points where the pinching condition does not hold, we can find
portions of Delaunay surfaces which are free boundary in a ball with a large radius and that contains some
of those points.

\section{Gaps in higher dimension}\label{high}
As stated in the Introduction, we observe that using the Hardy inequality for submanifolds due to Carron \cite{Carron} a certain gap on the length of the second fundamental form of minimal hypersurfaces implies stability. More precisely, we have.

\begin{proposition} (Carron). Let $\Sigma^n \subset \mathbb{R}^{n+1}$ be a complete minimal hypersurface,
$n\geq3$. Then,
\[
\frac{(n-2)^2}{4}\int_{\Sigma}\frac{u^2}{|x|^2}d\Sigma \leq \int_{\Sigma}\|\nabla u\|^2d\Sigma\,,
\]
for all compactly supported function $u\in C^1_0(\Sigma)$.

In particular, if $\|A\|^2|x|^2\leq(n-2)^2/4$, then $\Sigma^n$ is stable. 
\end{proposition}

We point out that Carron established the Hardy inequality for complete submanifolds immersed in Hadamard manifolds that is, a simply connected Riemannian manifold with nonpositive sectional curvature (see also \cite{BMV} for a generalization).

Once we have that $\Sigma$ is stable, we can apply some known results to obtain gap theorems. As a first example we use the recent work of Chodosh and Li \cite{CLi} to get:

\begin{theorem}
Let $\Sigma^3 \subset \mathbb{R}^4$ be a complete minimal hypersurface such that
\[
\|A\|^2|x|^2\leq\frac{1}{4}\,.
\]
Then $\Sigma^3$ is a hyperplane.
\end{theorem}

Assuming Euclidean volume growth we can use Schoen-Simon-Yau \cite{SSY} and Schoen-Simon \cite{SS} to obtain results in dimensions up to 7. More precisely:

\begin{theorem}
Let $\Sigma^n \subset \mathbb{R}^{n+1}$ be a complete minimal hypersurface, $3\leq n\leq 6$, with Euclidean volume growth and such that
\[
\|A\|^2|x|^2\leq\frac{(n-2)^2}{4}\,.
\]
Then $\Sigma^n$ is a hyperplane.
\end{theorem}

If $n\geq 3$ and $\Sigma$ has finite total curvature we have the following result.

\begin{theorem}
Let $\Sigma^n \subset \mathbb{R}^{n+1}$ be a complete minimal hypersurface, $n\geq 3$,
 such that $\int_{\Sigma}\|A\|^nd\Sigma<+\infty$ and
\begin{equation}\label{Ax}
\|A\|^2|x|^2\leq\frac{n(n-2)}{4}\,.
\end{equation}
Then $\Sigma^n$ is a hyperplane.
\end{theorem}

\begin{proof}
The condition (\ref{Ax}) implies that $\Sigma$ is $\frac{n-2}{n}$-stable (see \cite{ChengZhou} for a definition). 
Using \cite[Theorem 1.1]{BdcS} we have that $\|A\|$ is a  bounded function (in fact, goes to zero at infinity), 
and applying \cite[Theorem 1.1]{ChengZhou} we conclude that $\Sigma$ is either a hyperplane or a 
catenoid. 
However, one can  easily check that the gap condition (\ref{Ax}) is not satisfied by catenoids. 
Thus, $\Sigma$ can only be a hyperplane.  

\end{proof}

\section{Further questions}

In \cite{BV}, the first author and Viana extended Ambrozio-Nunes' Theorem for the higher codimension case. So, it is natural to ask whether versions of Theorems \ref{teoprin} and \ref{teoprin002} hold in higher dimension and higher codimension.

In \cite[Section 8.3]{BPS}, Bettiol, Piccione and Santoro proved that 
Delaunay surfaces provide a smooth 1-parameter family of free boundary annuli in the unit ball. 
It is an interesting question to check if the surfaces in this family satisfy the pinching condition (\ref{condcomplet1}).

Finally, it is interesting to ask what are the best constants in the theorems of Section \ref{high}, and also if they can be extended for complete CMC hypersurfaces in the space forms, or in more general Riemannian manifolds.

\bibliographystyle{amsplain}

\bibliography{BCPbiblio}
\end{document}